\documentclass[letter]{article}
\usepackage{amssymb,amsfonts, amsmath, amsthm}
\usepackage[pdftex,colorlinks,linkcolor=blue,citecolor=red]{hyperref}
\usepackage[english]{babel}
\usepackage[latin1]{inputenc}
\usepackage{enumerate}

\newtheorem{teo}{Theorem}
\newtheorem{pro}{Proposition}
\newtheorem{defi}{Definition}
\newtheorem{cor}{Corolary}
\newtheorem{ejem}{Example}
\newtheorem{nota}{Remark}

\begin{document}

\title{Some generalizations of the notion of Lie algebra }
\author{
Dennise Garc\'ia-Beltr\'an and Jos\'e A. Vallejo\\
Facultad de Ciencias \\
Universidad Aut\'onoma de San Luis Potos\'i­ \\
{\normalsize Lat. Av.\ Salvador Nava \indent s/n,
78290-San Luis Potos\'i­ (M\'exico) }\\
{\small \emph{E-mail:}
\texttt{dennise.gb@alumnos.uaslp.edu.mx, jvallejo@fc.uaslp.mx }}
}
\date{July 26 2010}
\maketitle

\begin{abstract}
We introduce the notion of left (and right) quasi-Loday algebroids and a ``universal space'' for them, called a
left (right) omni-Loday algebroid, in such a way that Lie algebroids, omni-Lie algebras and omni-Loday algebroids are particular
substructures.
\par\smallskip\noindent
\emph{Keywords:} Loday or Leibniz algebras, Loday or Leibniz algebroids, omni-Lie algebras, omni-Lie algebroids.
\par\smallskip\noindent
{\bf 2010 MSC:} 17A32, 53D17, 58H99.
\end{abstract}

\section{Introduction}
There are several ways in which Lie algebras can be generalized. Recall that if $M$ is an 
$\mathcal{R}$-module\footnote{Here $\mathcal{R}$ is a commutative ring with unit element $1_{\mathcal{R}}$.}
endowed with a $\mathcal{R}$-bilinear bracket $[\;,\;]:M\times M\rightarrow M$ such that for all $u,v,w\in M$
\begin{enumerate}
\item[(i)] $[u,v]=-[v,u]$ (antisymmetry)
\item[(ii)] \label{ii} $[u,[v,w]]+[w,[u,v]]+[v,[w,u]]=0$ (Jacobi identity),
\end{enumerate}
then $(M,[\;,\;])$ is a Lie algebra structure over $M$. When $\mathcal{R}=\mathbb{R}$ (resp. $\mathbb{C}$) we speak about a real Lie algebra 
(resp. a complex Lie algebra).\\
Let us briefly mention some of these generalizations:
\begin{enumerate}[a)]
\item\label{gena} We can lift the restriction of antisymmetry. We then get the notion of Loday (or Leibniz\footnote{We prefer the 
denomination
``Loday algebra'', see \cite{Kosmann2} for an explanation.}) algebra \cite{Loday,Kosmann2}. More precisely, the pair 
$(M,[\;,\;])$ is a left 
Loday algebra if, instead of conditions (i), (ii) above, it satisfies the left Leibniz identity:
\begin{enumerate}
\item[(iii)]\label{lli} $[u,[v,w]]=[[u,v],w]+[v,[u,w]]$.
\end{enumerate}
Note that this condition can be expressed by saying that $[u,\;]$ is a derivation with respect to the product $[\;,\;]$.
Analogously, we can define right Loday algebras over $M$, by imposing that $[\;,w]$ be a derivation with respect to $[\;,\;]$:
\begin{enumerate}
\item[(iii')] $[[u,v],w]=[[u,w],v]+[u,[v,w]]$
\end{enumerate}
Note that any one of (iii) or (iii') is equivalent to the Jacobi identity when $[\;,\;]$ is antisymmetric.
\item\label{genb} Also, it is possible to consider a family of Lie algebras parametrized by points on a manifold $M$ which, with some natural 
geometric assumptions, lead to the idea of Lie algebroid introduced by J. Pradines (see \cite{Kosmann,Mackenzie}). To be precise, a Lie 
algebroid over a manifold $M$ (assume it real for simplicity) is given by a vector bundle  $\pi:E\rightarrow M$, an $\mathbb{R}$-bilinear 
bracket $[\;,\;]:\Gamma E\times \Gamma E\rightarrow \Gamma E$ defined on the $\mathcal{C}^{\infty}(M)$-module of sections of $E$, and a 
mapping $q_{E}:\Gamma E\rightarrow\mathfrak{X}(M)$ (called the anchor map) such that, for all $X,Y\in\Gamma E,\;f\in\mathcal{C}^{\infty}(M)$:
\begin{enumerate}[(1)]
\item $(\Gamma E,[\;,\;])$ is a real Lie algebra,
\item\label{LieAlg1} $[X,fY]=f[X,Y]+q_E(X)(f).Y$
\end{enumerate}
Note that, in the case when $M$ reduces to a single point, a Lie algebroid over $M=\{*\}$ is just a Lie algebra.
A basic property of Lie algebroids is that the anchor map $q_E$ is a Lie algebra morphism  when the bracket on $\mathfrak{X}(M)$ is taken as 
the Lie bracket of vector fields \cite{KM,Grabowski}.
\item\label{genc} Finally, A. Weinstein introduced in \cite{Weinstein} the concept of omni-Lie algebra, a structure that can be thought of as 
a kind of ``universal space" for Lie algebras: take any natural number $n\geq 2$ and consider the space product 
$\mathcal{E}_n=\mathfrak{gl}_n\times \mathbb{R}^n$ endowed with the $\mathbb{R}$-bilinear form 
$\{\;,\;\}:\mathcal{E}_n\times \mathcal{E}_n\rightarrow \mathcal{E}_n$ given by
\begin{equation*}
\{(A,x),(B,y)\}=\left([A,B],\frac{1}{2}\left(Ay-Bx\right)\right).
\end{equation*}
where $[A,B]=A\circ B-B\circ A$ is the Lie bracket of $\mathfrak{gl}_n$. Then, $(\mathcal{E}_n,\{\;,\;\})$ is the $n$-dimensional omni-Lie 
algebra. The reason behind this denomination is that any $n$-dimensional real Lie algebra $\mathfrak{g}$ is a closed maximal subspace of 
$(\mathcal{E}_n,\{\;,\;\})$.
\end{enumerate}

Our goal is to define a structure for which the constructions mentioned in \ref{gena}),\ref{genb}),\ref{genc}) appear as particular cases. In 
an absolutely unimaginative way, we will call it a left omni-Loday algebroid (of course, there exists the corresponding ``right" definition). 
As we will see, this also include as a particular case the notion of omni-Lie algebroid. Actually, the object we will construct will carry on a
bracket that has already appeared in the literature, although under a different approach. In the paper \cite{Kinyon1}, M. K. Kinyon and A. Weinstein
attacked the problem of integrating (in the sense of S. Lie's ``Third Theorem'') a Loday algebra\footnote{For more recent results in this topic,
called the coquecigrue problem, see \cite{Kinyon2,Ongay,Sheng2,Sheng3} and references therein.}, and they gave the following example: Take
$(\frak h ,[\; ,\; ])$ a Lie algebra, and let $V$ be an $\frak h -$module with left action on $V$ given by $(\zeta ,x)\mapsto \zeta x$.
Then, we have the induced left action of $\frak h$ on $\frak h \times V$
$$
\zeta (\eta ,y)=([\zeta ,\eta ],\zeta y).
$$
A binary operation $\cdot$ can be defined on $\mathcal E =\frak h \times V$ through
$$
(\zeta ,x)\cdot (\eta ,y)=\zeta (\eta ,y)=([\zeta ,\eta ],\zeta y).
$$
It turns out that $(\mathcal E ,\cdot )$ is a Loday algebra, and if $\frak h$ acts nontrivially on $V$, then $(\mathcal E ,\cdot )$ is \emph{not} a Lie
algebra. Kinyon and Weinstein called $\mathcal E$ with this Loday algebra structure the hemisemidirect product of $\frak h$ with $V$. Our omni-Loday 
algebroid will be a particular case of this construction, taking $\mathfrak{gl}(V)$ as $\frak h$ (see Definition \ref{d4}).

To achieve our goal, let us note that it is necessary to 
recast the definition of a Lie algebroid in  a form more suitable to an algebraic treatment, as in \ref{gena}), \ref{genc}). This can be easily done, just note 
that $\mathcal{C}^{\infty}(M)$ can be replaced by any $\mathcal{R}$-algebra $\mathcal{A}$, with unit element $1_{\mathcal{A}}$ and 
commutative, $\Gamma E$ by a faithful $\mathcal{A}$-module $\mathcal{F}$, and $\mathfrak{X}(M)$ by the module of derivations 
$Der_{\mathcal{R}}(\mathcal{A})$.

This idea was cleverly exploited by J. Grabowski who, in the paper \cite{Grabowski}, used it to prove the property of the anchor map
of being a Lie algebra morphism. In the same paper, it is proved that there exist obstructions to the existence of Loday algebroid structures
on vector bundles over a manifold $M$, stated in terms of the rank of these bundles (see Theorems \ref{teograb1}, \ref{teograb2} below). As
we will see, we can bypass these obstructions by considering left and right structures separately.
 
\textbf{Acknowledgements} The authors want to express their gratitude to Prof. M. K. Kinyon and Prof. Y. Sheng for their useful comments and for
providing references to previous work on Loday algebras. 

\section{Quasi-derivations}
The basic properties of a Lie algebroid are encoded in its anchor map, which in this context is a mapping 
$\rho:\mathcal{F}\rightarrow Der_{\mathcal{R}}(\mathcal{A})$.
We will assume that $\mathcal{F}$ is endowed with an $\mathcal{R}$-bilinear bracket $[\![\;,\;]\!]$, then $\rho$ is determined by two 
adjoints maps $ad^L_{\mathcal{A}},\;ad^R_{\mathcal{A}}:\mathcal{F}\rightarrow End_{\mathcal{R}}(\mathcal{F})$, given respectively by 
$ad^L_{\mathcal{A}}(X)=[\![X,\;]\!],\;ad^R_{\mathcal{A}}(X)=[\![\;,X]\!]$.\\
Under certain mild conditions, these mappings are quasi-derivations of $\mathcal{F}$, a property which is basic in the study of $\rho$. For 
instance, the fact that $ad^L_{\mathcal{A}},\;ad^R_{\mathcal{A}}$ are quasi-derivations allows one to prove that $\rho$ is a morphism of Lie 
algebras (when $(\mathcal{F},[\![\;,\;]\!])$ is Lie and we take the commutator of endomorphisms as the bracket on $End_{\mathcal{R}}
(\mathcal{F})$), see \cite{Grabowski} (we refer the reader to that paper for the proof of the results stated in this section).\\
We recall that an operator $D\in End_{\mathcal{R}}(\mathcal{F})$ is a quasi-derivation if for a given $f\in\mathcal{A}$ there exists 
$g\in\mathcal{A}$ such that
$$[D,\mu_f]=\mu_g,$$
where $[\;,\;]$ is the commutator of endomorphisms of $\mathcal{F}$, and $\mu_h(X)=h.X$, for any $h\in \mathcal{A},\;X\in\mathcal{F}$.
A quasi-derivation is called a tensor operator when 
$$[D,\mu_f]=0,\;\;\;\forall f\in \mathcal{A}.$$
Note that this is equivalent to $D$ being $\mathcal{A}$-linear (and not just $\mathcal{R}$-linear).
Some other straightforward properties of quasi-derivations are:
\begin{enumerate}[(1)]
\item The set of all the quasi-derivations of $\mathcal{F},\;QDer_{\mathcal{R}}(\mathcal{F})$ is an $\mathcal{R}$-module.
\item The commutator of endomorphisms on $Der_{\mathcal{R}}(\mathcal{F})$ restricts to a closed bracket on $QDer_{\mathcal{R}}(\mathcal{F})$ 
(i.e, if $D_1,\;D_2\in QDer_{\mathcal{R}}(\mathcal{F}), [D_1,D_2]\in QDer_{\mathcal{R}}(\mathcal{F})$). 
Thus $(QDer_{\mathcal{R}}(\mathcal{F}),[\;,\;])$ inherits the Lie algebra structure of $(End_{\mathcal{R}}(\mathcal{F}),[\;,\;])$.
\item $QDer_{\mathcal{R}}(\mathcal{F})$ is not just an $\mathcal{R}$-module. Defining, for any $f\in\mathcal{A}$ and 
$D\in QDer_{\mathcal{R}}(\mathcal{F})$,
$$f.D:=\mu_f\circ D,$$
it results that $QDer_{\mathcal{R}}(\mathcal{F})$ is an $\mathcal{A}$-module too.
\item\label{imp} $(QDer_{\mathcal{R}}(\mathcal{F}),[\;,\;])$ is not just a Lie algebra, but also a Poisson algebra (with the product given by 
the composition of endomorphisms).
\end{enumerate}
The following results will be crucial in the sequel.
\begin{teo}\label{t1}
There exists an $\mathcal{R}$-linear mapping  $\;\widehat{}:QDer_{\mathcal{R}}(\mathcal{F})\rightarrow Der_{\mathcal{R}}(\mathcal{A})$ 
such that
$$[D,\mu_f]=\mu_{\widehat{D}(f)}$$
\end{teo}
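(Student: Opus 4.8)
The plan is to define $\widehat{D}$ pointwise from the defining property of a quasi-derivation, and then to verify in turn that it is well defined, that it lands in $Der_{\mathcal{R}}(\mathcal{A})$, and that the assignment $D\mapsto\widehat{D}$ is $\mathcal{R}$-linear. The structural fact I would exploit throughout is that $\mathcal{F}$ is a \emph{faithful} $\mathcal{A}$-module, so that the representation $\mu:\mathcal{A}\rightarrow End_{\mathcal{R}}(\mathcal{F})$, $h\mapsto\mu_h$, is injective; this turns each equation $[D,\mu_f]=\mu_g$ into one that determines $g$ \emph{uniquely}.

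First I would fix $D\in QDer_{\mathcal{R}}(\mathcal{F})$. By definition, for each $f\in\mathcal{A}$ there is some $g\in\mathcal{A}$ with $[D,\mu_f]=\mu_g$; by faithfulness this $g$ is unique, so setting $\widehat{D}(f):=g$ produces a well-defined map $\widehat{D}:\mathcal{A}\rightarrow\mathcal{A}$ characterised by $[D,\mu_f]=\mu_{\widehat{D}(f)}$, which is exactly the required identity.

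Next I would check that $\widehat{D}$ is a derivation of $\mathcal{A}$. Its $\mathcal{R}$-linearity follows because $h\mapsto\mu_h$ is $\mathcal{R}$-linear and the commutator $[D,-]$ is $\mathcal{R}$-linear, so $\mu_{\widehat{D}(\lambda f+f')}=[D,\mu_{\lambda f+f'}]=\mu_{\lambda\widehat{D}(f)+\widehat{D}(f')}$ and uniqueness forces the linear relation. For the Leibniz rule the key observation is that $\mu$ is multiplicative, $\mu_{fg}=\mu_f\circ\mu_g$ (a module axiom); applying the Leibniz identity satisfied by any commutator, $[D,\mu_f\mu_g]=[D,\mu_f]\mu_g+\mu_f[D,\mu_g]$, I obtain
$$\mu_{\widehat{D}(fg)}=\mu_{\widehat{D}(f)}\mu_g+\mu_f\mu_{\widehat{D}(g)}=\mu_{g\,\widehat{D}(f)+f\,\widehat{D}(g)},$$
and faithfulness again gives $\widehat{D}(fg)=f\,\widehat{D}(g)+g\,\widehat{D}(f)$, so $\widehat{D}\in Der_{\mathcal{R}}(\mathcal{A})$.

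Finally, the $\mathcal{R}$-linearity of the assignment $D\mapsto\widehat{D}$ follows from the bilinearity of the commutator: $[\lambda D_1+D_2,\mu_f]=\lambda[D_1,\mu_f]+[D_2,\mu_f]=\mu_{\lambda\widehat{D_1}(f)+\widehat{D_2}(f)}$, so by uniqueness $\widehat{\lambda D_1+D_2}=\lambda\widehat{D_1}+\widehat{D_2}$. I expect the only genuinely delicate point to be the repeated appeal to faithfulness: every passage from an identity between the operators $\mu_{(\cdot)}$ to an identity between elements of $\mathcal{A}$ rests on the injectivity of $h\mapsto\mu_h$, and this is precisely the place where the hypothesis that $\mathcal{F}$ be faithful cannot be dropped.
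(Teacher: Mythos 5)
Your proposal is correct and complete: defining $\widehat{D}(f)$ as the unique $g$ with $[D,\mu_f]=\mu_g$ (uniqueness coming from faithfulness of $\mathcal{F}$), then deriving $\mathcal{R}$-linearity and the Leibniz rule of $\widehat{D}$ from the commutator identity $[D,\mu_f\mu_g]=[D,\mu_f]\mu_g+\mu_f[D,\mu_g]$ together with $\mu_{fg}=\mu_f\circ\mu_g$ and commutativity of $\mathcal{A}$, is exactly the standard argument. Note that the paper itself gives no proof of this theorem --- it refers the reader to Grabowski \cite{Grabowski} for all results of that section --- and your proof is essentially the one found there, so the approaches coincide.
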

\begin{cor}\label{c1}
The $\mathcal{R}$-linear mapping $\;\widehat{}$ extends to a Lie algebra morphism:
$$[\widehat{D_1},\widehat{D_2}]=\widehat{[D_1,D_2]},\;\;\;\forall D_1,\;D_2\in QDer_{\mathcal{R}}(\mathcal{F}).$$
\end{cor}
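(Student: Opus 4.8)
The plan is to reduce the identity $[\widehat{D_1},\widehat{D_2}]=\widehat{[D_1,D_2]}$, which is an equality of derivations of $\mathcal{A}$, to an equality of the associated multiplication operators on $\mathcal{F}$, and then to invoke faithfulness of $\mathcal{F}$. First I would observe that since $\mathcal{F}$ is a faithful $\mathcal{A}$-module, the assignment $f\mapsto\mu_f$ is injective: if $\mu_f=\mu_g$, then $(f-g).X=0$ for every $X\in\mathcal{F}$, whence $f=g$. Therefore it suffices to verify that $\mu_{[\widehat{D_1},\widehat{D_2}](f)}=\mu_{\widehat{[D_1,D_2]}(f)}$ for every $f\in\mathcal{A}$. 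Note also that $\widehat{[D_1,D_2]}$ is well defined because, by property (2) stated above, the commutator $[D_1,D_2]$ again lies in $QDer_{\mathcal{R}}(\mathcal{F})$, so Theorem \ref{t1} may be applied to it.

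Next I would compute each side using the defining relation of Theorem \ref{t1}, namely $[D,\mu_h]=\mu_{\widehat{D}(h)}$. For the right-hand side this gives directly $\mu_{\widehat{[D_1,D_2]}(f)}=[[D_1,D_2],\mu_f]$. For the left-hand side, the bracket $[\widehat{D_1},\widehat{D_2}]$ is the commutator of derivations of $\mathcal{A}$, so $[\widehat{D_1},\widehat{D_2}](f)=\widehat{D_1}(\widehat{D_2}(f))-\widehat{D_2}(\widehat{D_1}(f))$. Applying $\mu$ and using the defining relation twice on each term (first with $D_1$ and $h=\widehat{D_2}(f)$, then peeling off the inner expression with $D_2$ and $h=f$, and symmetrically) yields
\[
\mu_{[\widehat{D_1},\widehat{D_2}](f)}=[D_1,[D_2,\mu_f]]-[D_2,[D_1,\mu_f]].
\]

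Finally I would close the argument with the Jacobi identity in the Lie algebra $(End_{\mathcal{R}}(\mathcal{F}),[\;,\;])$, applied to the triple $D_1,D_2,\mu_f$, which rewrites the right-hand side above exactly as $[[D_1,D_2],\mu_f]$. Comparing this with the expression already obtained for $\mu_{\widehat{[D_1,D_2]}(f)}$ shows that the two multiplication operators coincide for all $f$, and injectivity of $f\mapsto\mu_f$ then delivers the claimed equality of derivations. I do not anticipate a genuine obstacle here: the only points demanding care are the bookkeeping of which operator the hat relation is applied to at each step, and making explicit that it is the faithfulness of $\mathcal{F}$ that converts an identity between operators on $\mathcal{F}$ into the desired identity between elements of $\mathcal{A}$.
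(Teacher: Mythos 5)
Your proof is correct and complete: the reduction via faithfulness of $\mathcal{F}$ to an identity of multiplication operators, the double application of the relation $[D,\mu_f]=\mu_{\widehat{D}(f)}$ from Theorem \ref{t1}, and the final appeal to the Jacobi identity for the commutator in $End_{\mathcal{R}}(\mathcal{F})$ are all valid, and you correctly flag that closure of $QDer_{\mathcal{R}}(\mathcal{F})$ under the commutator is needed for $\widehat{[D_1,D_2]}$ to make sense. Note that the paper itself gives no proof of Corollary \ref{c1} — it is quoted from \cite{Grabowski}, to which the reader is referred — so there is no in-paper argument to compare against; your route is the standard one and is essentially the argument found in that reference.
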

Combining \eqref{imp} above with Theorem \ref{t1}, we also get:
\begin{cor}\label{c2}
If $\mathcal{A}$ is an $\mathcal{R}$-commutative algebra, the commutator $[\;,\;]$ on $QDer_{\mathcal{R}}(\mathcal{F})$ satisfies
\begin{equation*}
[D_1,f.D_2]=f.[D_1,D_2]+\widehat{D_1}(f).D_2
\end{equation*}
for all $D_1,\;D_2\in QDer_{\mathcal{R}}(\mathcal{F}),\;f\in \mathcal{A}$
\end{cor}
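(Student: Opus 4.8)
The plan is to compute $[D_1, f.D_2]$ directly, unwinding the definition of the $\mathcal{A}$-action from item (3) and then invoking the derivation property of the commutator together with Theorem \ref{t1}. First I would rewrite $f.D_2 = \mu_f \circ D_2$ by definition, so that the object to be computed becomes the commutator $[D_1, \mu_f \circ D_2]$ taken inside $End_{\mathcal{R}}(\mathcal{F})$.

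The key algebraic fact I would use is that in any associative algebra the commutator is a derivation with respect to the product, i.e.\ $[a, bc] = [a,b]c + b[a,c]$. This is precisely the Poisson compatibility recorded in item \eqref{imp}, applied to the associative product given by composition of endomorphisms. Applying it with $a = D_1$, $b = \mu_f$, $c = D_2$ gives
\begin{equation*}
[D_1, \mu_f \circ D_2] = [D_1, \mu_f]\circ D_2 + \mu_f \circ [D_1, D_2].
\end{equation*}

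Next I would substitute the content of Theorem \ref{t1}, namely $[D_1, \mu_f] = \mu_{\widehat{D_1}(f)}$, into the first term, obtaining $\mu_{\widehat{D_1}(f)} \circ D_2 + \mu_f \circ [D_1, D_2]$. It then only remains to reinterpret the two summands through the $\mathcal{A}$-module action $g.D := \mu_g \circ D$: the first term is exactly $\widehat{D_1}(f).D_2$ and the second is $f.[D_1,D_2]$, which yields the asserted identity after transposing the two terms on the right. Here the hypothesis that $\mathcal{A}$ is commutative is what guarantees that $f.D_2$ is again a quasi-derivation (so the $\mathcal{A}$-module structure of item (3) is genuinely available), while item (2) ensures $[D_1,D_2]\in QDer_{\mathcal{R}}(\mathcal{F})$; thus every expression appearing is a legitimate element of the module on which both sides are compared.

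The computation itself is routine; the only point requiring care — and the step I expect to be the main (if modest) obstacle — is justifying that the derivation identity for the commutator may be applied here, i.e.\ that we are genuinely working inside the associative algebra $(End_{\mathcal{R}}(\mathcal{F}), \circ)$ and that all intermediate operators remain quasi-derivations. Once the closure statements of items (2)--\eqref{imp} and the formula of Theorem \ref{t1} are in hand, no further analysis is needed.
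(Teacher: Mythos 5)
Your proposal is correct and follows exactly the route the paper indicates: the paper derives Corollary~\ref{c2} by ``combining'' the Poisson-algebra property~\eqref{imp} of $QDer_{\mathcal{R}}(\mathcal{F})$ (i.e.\ the Leibniz compatibility of the commutator with composition, $[D_1,\mu_f\circ D_2]=[D_1,\mu_f]\circ D_2+\mu_f\circ[D_1,D_2]$) with Theorem~\ref{t1}, which is precisely your computation. Your added remarks on why $f.D_2$ and $[D_1,D_2]$ remain quasi-derivations (items (2)--(3), using commutativity of $\mathcal{A}$) are sound and merely make explicit what the paper leaves to the reference \cite{Grabowski}.
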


\section{Left Loday quasi-algebroids}
The formula obtained in Corollary \ref{c2} looks very similar to condition \eqref{LieAlg1} in the definition of Lie algebroid.
We can formalize this observation generalizing at once the definition, simply by replacing the Lie structure on $\Gamma E$ (our $\mathcal{F}$ 
in the algebraic setting) by a Loday one.
Thus, let $(\mathcal{F},[\![\;,\;]\!])$ be a left Loday algebra. Given an $X\in \mathcal{F}$, denote by 
$ad^L_X,\;ad_R^X:\mathcal{F}\rightarrow \mathcal{F}$ the endomorphisms $ad^L_X(Y)=[\![X,Y]\!],\;ad^R_X(Y)=[\![Y,X]\!]$.\\
Note that if $[\![\;,\;]\!]$ is antisymmetric, then $ad^L_X=ad^R_{-X}$
\begin{defi}\label{d1}
The pair $(\mathcal{F},[\![\;,\;]\!])$ is called a left Loday quasi-algebroid if 
$ad^L_X\in QDer_{\mathcal{R}}(\mathcal{F}),\;\forall X\in\mathcal{F}$.
\end{defi}
This amount to the condition that, given $X\in\mathcal{F},\;f\in\mathcal{A}$:
\begin{equation*}
[\![X,f.Y,]\!]-f.[\![X,Y]\!]=[ad^L_X,\mu_f](Y)=\mu_{\widehat{ad^L_X}(f)}(Y)=\widehat{ad^L_X}(f).Y\;\;\; , \forall Y\in \mathcal{F},
\end{equation*}
and motivates the following definition.
\begin{defi}\label{d2}
The mapping
$$\begin{array}{rcl}
q_{\mathcal{F}}^L:\mathcal{F}&\longrightarrow &Der_{\mathcal{R}}(\mathcal{A})\\
X&\longmapsto&q_{\mathcal{F}}^L(X):=\widehat{ad^L_X}
\end{array}$$
is called the anchor of the left Loday quasi-algebroid. If $q_{\mathcal{F}}^L$ is tensorial, it is said that 
$(\mathcal{F},[\![\;,\;]\!],q_{\mathcal{F}}^L)$ is a left Loday algebroid on $\mathcal{F}$.
\end{defi}
The condition in Definition \ref{d1} now reads
\begin{equation*}
[\![X,f.Y]\!]=f.[\![X,Y]\!]+q_{\mathcal{F}}^L(X)(f).Y,
\end{equation*}
this justifying the terminology with the ``left" prefix.
\begin{nota}
There is the corresponding notion of right Loday quasi-algebroid, when $ad^R_X\in QDer_{\mathcal{R}}(\mathcal{F})$. 
In this case, the formula reads
\begin{equation*}
[\![f.X,Y]\!]=f.[\![X,Y]\!]+q_{\mathcal{F}}^R(Y)(f).X
\end{equation*}
\end{nota}
\begin{teo}\label{t2}
Let $(\mathcal{F},[\![\;,\;]\!],q_{\mathcal{F}}^L)$ be a left Loday quasi-algebroid. Then, 
$q_{\mathcal{F}}:(\mathcal{F},[\![\;,\;]\!])\longrightarrow (Der_{\mathcal{R}}(\mathcal{A}), [\;,\;])$ 
is a morphism of left Loday $\mathcal{R}$-algebras.
\end{teo}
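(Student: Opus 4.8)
The plan is to show that $q_{\mathcal{F}}^L$ preserves the left Leibniz identity, i.e.\ that for all $X,Y,Z\in\mathcal{F}$ we have $q_{\mathcal{F}}^L([\![X,Y]\!]) = [q_{\mathcal{F}}^L(X),q_{\mathcal{F}}^L(Y)]$ as derivations of $\mathcal{A}$, since a morphism of left Loday algebras is precisely an $\mathcal{R}$-linear map intertwining the brackets. The key observation that unlocks everything is the relation between the adjoint maps and the bracket: from the definition $ad^L_X(Y)=[\![X,Y]\!]$, the left Leibniz identity
\begin{equation*}
[\![X,[\![Y,Z]\!]]\!]=[\![[\![X,Y]\!],Z]\!]+[\![Y,[\![X,Z]\!]]\!]
\end{equation*}
rewrites, after reading off the action on the arbitrary element $Z$, as the operator identity
\begin{equation*}
ad^L_X\circ ad^L_Y - ad^L_Y\circ ad^L_X = ad^L_{[\![X,Y]\!]},
\end{equation*}
that is, $[ad^L_X,ad^L_Y]=ad^L_{[\![X,Y]\!]}$ in $QDer_{\mathcal{R}}(\mathcal{F})$. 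This is the Loday-algebra analogue of the statement that the left-adjoint representation is a homomorphism, and it holds verbatim without antisymmetry.

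First I would invoke Definition \ref{d1} to guarantee that each $ad^L_X$ lies in $QDer_{\mathcal{R}}(\mathcal{F})$, so that the hat operation of Theorem \ref{t1} applies to it and, by Definition \ref{d2}, $q_{\mathcal{F}}^L(X)=\widehat{ad^L_X}$. Next I would establish the operator identity $[ad^L_X,ad^L_Y]=ad^L_{[\![X,Y]\!]}$ displayed above by evaluating both sides on an arbitrary $Z\in\mathcal{F}$ and matching terms against the left Leibniz identity. Then I would apply the hat map to both sides and feed the result into Corollary \ref{c1}, which tells us that hat is a Lie-algebra morphism, $\widehat{[D_1,D_2]}=[\widehat{D_1},\widehat{D_2}]$. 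Chaining these gives
\begin{equation*}
q_{\mathcal{F}}^L([\![X,Y]\!])=\widehat{ad^L_{[\![X,Y]\!]}}=\widehat{[ad^L_X,ad^L_Y]}=[\widehat{ad^L_X},\widehat{ad^L_Y}]=[q_{\mathcal{F}}^L(X),q_{\mathcal{F}}^L(Y)],
\end{equation*}
which is exactly the morphism property, and $\mathcal{R}$-linearity of $q_{\mathcal{F}}^L$ is immediate from the $\mathcal{R}$-bilinearity of the bracket together with the $\mathcal{R}$-linearity of the hat map asserted in Theorem \ref{t1}.

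I expect the main obstacle to be the careful verification of the operator identity $[ad^L_X,ad^L_Y]=ad^L_{[\![X,Y]\!]}$, since here the absence of antisymmetry matters: one must check that it is genuinely the \emph{left} Leibniz identity (with $ad^L$ throughout) that produces this commutator relation, rather than an accidental cancellation that would only work in the antisymmetric Lie case. Concretely, the left Leibniz identity says $ad^L_X$ is a derivation of the bracket, so $ad^L_X([\![Y,Z]\!])=[\![ad^L_X Y,Z]\!]+[\![Y,ad^L_X Z]\!]$, and the first term on the right is precisely $ad^L_{ad^L_X Y}(Z)=ad^L_{[\![X,Y]\!]}(Z)$ while the left side is $ad^L_X(ad^L_Y Z)$ and the second term is $ad^L_Y(ad^L_X Z)$; rearranging yields the claim. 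Once this is in place the remainder is a formal application of Theorem \ref{t1} and Corollary \ref{c1}, so no further difficulty arises. One small bookkeeping point to keep consistent is that the statement writes $q_{\mathcal{F}}$ whereas the definitions use $q_{\mathcal{F}}^L$; I would treat these as the same map throughout.
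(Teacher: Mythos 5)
Your proposal is correct and follows essentially the same route as the paper's own proof: establish the operator identity $[ad^L_X,ad^L_Y]=ad^L_{[\![X,Y]\!]}$ by evaluating on an arbitrary $Z$ and using the left Leibniz identity, then conclude via Corollary \ref{c1} that $q_{\mathcal{F}}^L([\![X,Y]\!])=[q_{\mathcal{F}}^L(X),q_{\mathcal{F}}^L(Y)]$. Your added remarks on $\mathcal{R}$-linearity and on the notational slip $q_{\mathcal{F}}$ versus $q_{\mathcal{F}}^L$ are accurate but do not change the substance.
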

\begin{proof}
First, let us note that the condition of $[\![\;,\;]\!]$ being a Loday bracket on $\mathcal{F}$ means that
$$[ad_X^L,ad_Y^L]=ad_{[\![X,Y]\!]}^L\;\;\;\forall\;X,Y\in\mathcal{F}$$
To check this, let $Z\in \mathcal{F}$ and compute
\begin{eqnarray*}
[ad_X^L,ad_Y^L](Z)&=&ad_X^L(ad_Y^L(Z))-ad_Y^L(ad_X^L(Z))\\
&=&[\![X,[\![Y,Z]\!]\;]\!]-[\![Y,[\![X,Z]\!]\;]\!]\\
&=& [\![\;[\![X,Y]\!],Z]\!]+[\![Y,[\![X,Z]\!]\;]\!]-[\![Y,[\![X,Z]\!]\;]\!]\\
&=& [\![\;[\![X,Y]\!],Z]\!]\\
&=& ad_{[\![X,Y]\!]}^L(Z).
\end{eqnarray*}
As this is valid for all $Z\in\mathcal{F}$, we get the stated equivalence.\\
Now, Corollary \ref{c1} says that $\;\forall\; X,Y\in \mathcal{F}$:
$$[q_{\mathcal{F}}^L(X),q_{\mathcal{F}}^L(Y)]=[\widehat{ad _X^L},\widehat{ad _Y^L}]=
\widehat{[ad_X^L,ad_Y^L]}=\widehat{ad_{[\![X,Y]\!]}^L}=q_{\mathcal{F}}^L([\![X,Y]\!]).$$
\end{proof}
\begin{nota}
This result partly answers a question raised in Remark 3.3 (1) of \cite{Stienon}.
\end{nota}
The definitions just given can be particularized to the case of Lie algebras (i.e, $[\![\;,\;]\!]$ antisymmetric).
\begin{defi}\label{d3}
Let $(\mathcal{F},[\![\;,\;]\!])$ be a Lie algebra. If $ad_X^L\in QDer_{\mathcal{R}}(\mathcal{F}), \;\forall X\in\mathcal{F}$,
 we say that $(\mathcal{F},[\![\;,\;]\!],q_{\mathcal{F}})$, is a Lie quasi-algebroid, where
$$\begin{array}{rcl}
q_{\mathcal{F}}:F&\longrightarrow &Der_{\mathcal{R}}(\mathcal{A})\\
X&\longmapsto&q_{\mathcal{F}}(X):=\widehat{ad^L_X}
\end{array}$$
is the anchor map. If $q_{\mathcal{F}}$ is tensorial ($\mathcal{A}$-linear), then we say that
$(\mathcal{F},[\![\;,\;]\!],q_{\mathcal{F}})$ is a Lie algebroid.
\end{defi}
\begin{nota}
Note that in this case the distinction between the left and right cases is irrelevant: each left Lie quasi-algebroid with anchor 
$q_{\mathcal{F}}$ is also a right Lie quasi-algebroid with anchor $-q_{\mathcal{F}}$.
\end{nota}
How different are left (and right) Loday quasi-algebroids, Lie quasi-algebroids and Loday algebroids? In some cases, there is no such 
distinction: If we take $\mathcal{R}=\mathbb{R},\;\mathcal{A}=\mathcal{C}^{\infty}(M),\;\mathcal{F}=\Gamma E$, with $\pi:E\rightarrow M$ a 
vector bundle over a manifold $M$, Grabowski calls a $QD-Loday$ (resp. Lie) algebroid a left Loday 
(resp. Lie) quasi-algebroid (that is, 
$ad_X^L\in QDer_{\mathcal{R}}(\mathcal{F}),\;\forall X\in\mathcal{F}$) such that 
$ad_X^R\in QDer_{\mathcal{R}}(\mathcal{F}),\;\forall X\in\mathcal{F}$; then, he proves:
\begin{teo}\label{teograb1}
Every QD-Loday algebroid (resp. QD-Lie) with $rank\geq 1$, is a Loday algebroid (resp. Lie).
\end{teo}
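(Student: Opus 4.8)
The plan is to show that, under the stated hypotheses, the anchors $q_{\mathcal{F}}^L$ and $q_{\mathcal{F}}^R$ are \emph{tensorial} ($\mathcal{A}$-linear); by Definitions \ref{d2} and \ref{d3} this is exactly the passage from a quasi-algebroid to an algebroid. The leverage is that in a QD-Loday algebroid \emph{both} $ad^L_X$ and $ad^R_X$ are quasi-derivations, so Theorem \ref{t1} supplies, simultaneously, the two Leibniz-type rules $[\![X,f.Y]\!]=f.[\![X,Y]\!]+q_{\mathcal{F}}^L(X)(f).Y$ and $[\![f.X,Y]\!]=f.[\![X,Y]\!]+q_{\mathcal{F}}^R(Y)(f).X$.

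First I would expand $[\![f.X,g.Y]\!]$ in the two possible orders --- peeling off the outer scalar with one rule and the inner scalar with the other --- and subtract the results. The principal terms $fg.[\![X,Y]\!]$ cancel and one is left with the identity
\[
\varepsilon(X,f,g).Y=\delta(Y,g,f).X ,
\]
where $\varepsilon(X,f,g):=q_{\mathcal{F}}^L(f.X)(g)-f.q_{\mathcal{F}}^L(X)(g)$ and $\delta(Y,g,f):=q_{\mathcal{F}}^R(g.Y)(f)-g.q_{\mathcal{F}}^R(Y)(f)$ measure precisely the failure of the two anchors to be $\mathcal{A}$-linear. I would then record that $\varepsilon(X,f,\cdot)=\widehat{ad^L_{f.X}}-f.\widehat{ad^L_X}$ is a genuine element of $Der_{\mathcal{R}}(\mathcal{A})$ (a difference of derivations), and symmetrically for $\delta$; hence each defect is a first-order object whose vanishing is equivalent to tensoriality of the corresponding anchor.

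Next comes the rank input. Since $\mathcal{F}=\Gamma E$ is locally free, I would pass to a local frame $\{e_1,\dots,e_r\}$, $r=\mathrm{rank}\geq 1$, in which the identity reads $\varepsilon(e_\alpha,f,g).e_\beta=\delta(e_\beta,g,f).e_\alpha$. For $\alpha\neq\beta$ the frame fields $e_\alpha,e_\beta$ are pointwise linearly independent, which forces $\varepsilon(e_\alpha,f,g)=0=\delta(e_\beta,g,f)$; as $\alpha,f,g$ are arbitrary and the statement is local, both anchors are tensorial and the structure is a (left and right) Loday algebroid. The Lie case is the same computation: antisymmetry gives $ad^R_X=-ad^L_X$, so the single quasi-derivation hypothesis already produces both Leibniz rules.

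The hard part will be exactly this last step at the lowest admissible rank: for a line bundle ($r=1$) one never has two pointwise independent sections, the independence argument collapses, and the identity only yields $\varepsilon(e_1,f,g)=\delta(e_1,g,f)$. To close this case I would substitute the scaled arguments back into the left Leibniz identity $[\![X,[\![Y,Z]\!]]\!]=[\![[\![X,Y]\!],Z]\!]+[\![Y,[\![X,Z]\!]]\!]$, use Corollary \ref{c1} (that $q_{\mathcal{F}}^L$ is a morphism) to kill the principal terms, and try to squeeze out enough additional relations on the biderivation $\varepsilon(e_1,\cdot,\cdot)$ to force it to vanish. I expect this rank-one bookkeeping, and not the general mechanism, to be the genuine obstacle, and it is where the full QD-Loday hypothesis --- both $ad^L_X$ and $ad^R_X$ quasi-derivations, not tied together by antisymmetry --- must be used in an essential way.
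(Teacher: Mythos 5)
Your attempt cannot be compared with a proof in the paper, because the paper contains none: Theorem \ref{teograb1} is quoted from Grabowski's work \cite{Grabowski} (``then, he proves:''), so the only benchmark is Grabowski's own argument. Your mechanism for fibre dimension at least two is correct and is essentially that argument: with both $ad^L_X$ and $ad^R_X$ quasi-derivations, the two expansions of $[\![f.X,g.Y]\!]$ give $\varepsilon(X,f,g).Y=\delta(Y,g,f).X$, and pointwise linear independence of two sections kills both defects (to pass from frame elements to arbitrary sections, including those vanishing at a point, use additivity of $\varepsilon$ in its first argument; this is routine). The Lie case indeed reduces to this via $ad^R_X=-ad^L_X$.

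The genuine gap is the rank-one case, and it is not, as you hope, a matter of extra bookkeeping: the statement is \emph{false} for rank $1$, so the program you sketch for closing that case cannot succeed. Take $M$ symplectic (or Poisson, or Jacobi), $E=M\times\mathbb{R}$, so $\mathcal{F}=\Gamma E=\mathcal{C}^{\infty}(M)$, and let $[\![f,g]\!]=\{f,g\}$ be the Poisson bracket. It is antisymmetric, satisfies the Jacobi identity, and $[ad^L_f,\mu_g](h)=\{f,gh\}-g.\{f,h\}=\{f,g\}.h$, so $[ad^L_f,\mu_g]=\mu_{\{f,g\}}$ and, by antisymmetry, both adjoint maps are quasi-derivations: this is a QD-Lie (hence QD-Loday) algebroid of rank $1$. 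But its anchor $q(f)=\{f,\cdot\}=X_f$ is not tensorial, since $X_{gf}=g.X_f+f.X_g\neq g.X_f$. Moreover, this example passes every test your rank-one strategy would apply: the anchor is a morphism of Lie algebras ($X_{\{f,g\}}=[X_f,X_g]$, as in Corollary \ref{c1}), the Leibniz identity holds, and still $\varepsilon(f,h,g)=f.\{h,g\}\neq 0$. This is Kirillov's ``local Lie algebra'' phenomenon: rank-one skew QD-algebroids are Jacobi-type brackets, not Lie algebroids. Grabowski's theorem is in fact stated for rank $>1$, and the ``rank $\geq 1$'' printed here is a misquotation --- consistent with Theorem \ref{teograb2}, which promises only that rank-one QD-Loday algebroids are QD-\emph{Lie} algebroids, not Lie algebroids. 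With the hypothesis corrected to rank $\geq 2$, your first two paragraphs constitute a complete proof; the rank-one half of your plan should be deleted rather than completed.
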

\begin{teo}\label{teograb2}
Every QD-Loday algebroid of rank 1, is a QD-Lie algebroid.
\end{teo}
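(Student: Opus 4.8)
The plan is to reduce everything to a local model on a line bundle and then read off antisymmetry directly from the Leibniz identity. Since the rank is $1$, $\mathcal{F}=\Gamma E$ is, over any trivializing open set, a free $\mathcal{A}$-module generated by a single nowhere-vanishing section $e$; every section is $f\,e$ with $f\in\mathcal{A}$, and the whole bracket is encoded by the three local data $a\in\mathcal{A}$ (defined by $[\![e,e]\!]=a\,e$), $P:=q_{\mathcal{F}}^L(e)$ and $Q:=q_{\mathcal{F}}^R(e)$ in $Der_{\mathcal{R}}(\mathcal{A})$. First I would invoke Theorem \ref{teograb1}: as our structure is QD-Loday of rank $\geq 1$, the left anchor is tensorial, so $q_{\mathcal{F}}^L(f\,e)=f\,P$. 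Expanding $[\![f\,e,g\,e]\!]$ once by the left rule and once by the right rule (this is where the hypothesis that \emph{both} $ad^L$ and $ad^R$ are quasi-derivations enters) and comparing the two results forces the right anchor to be tensorial as well, $q_{\mathcal{F}}^R(g\,e)=g\,Q$. Consequently the bracket acquires the explicit local form
\begin{equation*}
[\![f\,e,g\,e]\!]=\bigl(a\,fg+g\,Q(f)+f\,P(g)\bigr)\,e .
\end{equation*}

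Next I would impose the left Leibniz identity on this expression. Writing it out as the statement that $ad^L_e$ is a derivation of the bracket (exactly as in the proof of Theorem \ref{t2}) and comparing coefficients yields, on the one hand, the scalar Riccati relation $a^{2}+Q(a)=0$ together with $a\,P=0$ and $[P,Q]=a\,Q$; on the other hand, comparing the second-order (symbol) parts produces the two symmetric relations $P\odot(P+Q)=0$ and $Q\odot(P+Q)=0$, where $\odot$ denotes the symmetric product of vector fields. The function $a$ is disposed of first: on the open set $\{a\neq 0\}$ the relation $a\,P=0$ forces $P=0$, whence $[P,Q]=a\,Q$ gives $Q=0$ there, contradicting $Q(a)=-a^{2}\neq 0$; hence $a\equiv 0$, and in particular $[\![e,e]\!]=0$.

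The crux is the vanishing of the symmetric part, which for $a=0$ is governed entirely by $P+Q$, since $[\![f\,e,g\,e]\!]+[\![g\,e,f\,e]\!]=(P+Q)(fg)\,e$. Adding the two symbol relations above gives $(P+Q)\odot(P+Q)=0$; because the symmetric square of a real vector field vanishes only for the zero field (equivalently, $\mathcal{A}=\mathcal{C}^{\infty}(M)$ is reduced, so a pointwise relation $v\odot v=0$ forces $v=0$), I conclude $P+Q=0$, that is $q_{\mathcal{F}}^R(e)=-q_{\mathcal{F}}^L(e)$. This is precisely where rank $1$ is indispensable: for rank $\geq 2$ one can only separate components to obtain tensoriality (Theorem \ref{teograb1}), whereas here the one-dimensional fibre leaves no room for a nonzero symmetric, or left-null, direction. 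Together with $a=0$ this gives $[\![f\,e,g\,e]\!]=\bigl(f\,P(g)-g\,P(f)\bigr)e=-[\![g\,e,f\,e]\!]$, and since antisymmetry is a pointwise, chart-independent statement it holds globally. Thus $[\![\;,\;]\!]$ is a Lie bracket and, the operators $ad^R_X$ being quasi-derivations by hypothesis, $(\mathcal{F},[\![\;,\;]\!],q_{\mathcal{F}}^L)$ is a QD-Lie algebroid. The hard part will be organizing the left-Leibniz coefficient comparison cleanly enough to isolate the two symbol identities $P\odot(P+Q)=0$ and $Q\odot(P+Q)=0$ separately rather than a single weaker relation, since it is their \emph{sum} that delivers $P+Q=0$.
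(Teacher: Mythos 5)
First, a point of context: the paper contains no proof of this statement at all --- Theorems \ref{teograb1} and \ref{teograb2} are quoted verbatim from Grabowski's paper \cite{Grabowski} (``then, he proves:''), so your attempt can only be compared against the proof given there. Measured that way, your proposal has a genuine gap, and it sits exactly at your first step. You invoke Theorem \ref{teograb1} to conclude that the left anchor of a rank-$1$ QD-Loday algebroid is tensorial, so that $q^L_{\mathcal{F}}(f e)=f P$ and the bracket collapses to $[\![f e,g e]\!]=(a f g+g Q(f)+f P(g))e$. But tensoriality of the anchor is precisely what fails in rank $1$: Grabowski's theorem gives tensoriality only for rank $>1$ (the ``rank $\geq 1$'' in the paper's transcription of Theorem \ref{teograb1} is a misquote), and for rank $1$ it is simply false. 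Concretely, take the trivial line bundle over $\mathbb{R}^2$, identify sections with functions, and let $[\![f,g]\!]=\partial_p f\,\partial_q g-\partial_q f\,\partial_p g$ be the canonical Poisson bracket. Then $[ad^L_f,\mu_h]=\mu_{\{f,h\}}$, so both adjoints are quasi-derivations and this is a rank-$1$ QD-Lie algebroid; yet its anchor $f\mapsto X_f$ (the Hamiltonian vector field) is not $\mathcal{C}^{\infty}$-linear, and the bracket is not of your reduced form --- indeed your final ``normal form'' $(f P(g)-g P(f))e$ forces the bracket to vanish on constants in a way the Poisson bracket does, giving $P=0$ and hence a zero bracket, a contradiction. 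So your step 1 rests on a false statement, and your intermediate and final local forms are contradicted by this example, even though the antisymmetry conclusion happens to hold for it.

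What the QD hypothesis actually yields in rank $1$ is only that the bracket is a bidifferential operator of order $\leq 1$ in each argument: locally $[\![f e,g e]\!]=\bigl(\Lambda(df,dg)+S(df,dg)+f X(g)+g Y(f)+c\,f g\bigr)e$, with $\Lambda$ antisymmetric, $S$ symmetric, $X,Y$ vector fields and $c$ a function. The real content of Theorem \ref{teograb2} --- and this is what Grabowski's proof does --- is to show that the left Leibniz identity applied to this \emph{general} form kills $S$, $c$ and $X+Y$, so that the bracket is a Jacobi/Kirillov local Lie algebra bracket (antisymmetric, but with possibly nonzero $\Lambda$ and non-tensorial anchor). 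Your bookkeeping downstream of the tensoriality assumption is, for what it is worth, correct: I checked that the Leibniz identity on $(a f g+g Q(f)+f P(g))e$ does produce $a^2+Q(a)=0$, $aP=0$, $[P,Q]=aQ$ and the two symbol relations $P\odot(P+Q)=0$, $Q\odot(P+Q)=0$, and that these imply $a=0$ and $P+Q=0$ as you claim. So what you have actually proved is the special case of the theorem in which the anchor is assumed tensorial (equivalently $\Lambda=S=0$); to close the gap you would need to rerun the coefficient comparison carrying the second-order terms $\Lambda+S$ along and show that the Leibniz identity eliminates the symmetric data, which is where the actual work of the theorem lies.
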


\section{Generation of Loday algebroids}
As we have seen in the previous section, in order to get genuine examples of Loday quasi-algebroids, 
we must avoid that the two conditions $ad^L_X\in QDer_{\mathcal{R}}(\mathcal{F})$ and 
$ad^R_X\in QDer_{\mathcal{R}}(\mathcal{F})$ be satisfied \emph{simultaneously}. To get examples of this
situation, it is useful to know how to generate Loday brackets from operators with certain features.
First of all, note that given a left Loday bracket
$[\![\;,\;]\!]:\mathcal{F}\times\mathcal{F}\rightarrow\mathcal{F}$, if we define
\begin{eqnarray*}
[\![\;,\;]\!]':\mathcal{F}\times\mathcal{F}&\longrightarrow&\mathcal{F}\\
(X,Y)&\longmapsto& [\![X,Y]\!]':=[\![Y,X]\!],
\end{eqnarray*}
then we have that $[\![\;,\;]\!]'$ is $\mathcal{R}-$bilinear and for all $X,Y,Z\in\mathcal{F}$:
\begin{eqnarray*}
[\![X,[\![Y,Z]\!]'\;]\!]'+[\![\;[\![X,Z]\!]',Y]\!]'&=& [\![X,[\![Z,Y]\!]\;]\!]'+[\![\;[\![Z,X]\!],Y]\!]' \\
&=& [\![\;[\![Z,Y]\!],X]\!]+[\![Y,[\![Z,X]\!]\;]\!]\\
&=&[\![Z,[\![Y,X]\!]\;]\!]\\
&=& [\![\;[\![X,Y]\!]',Z]\!]',
\end{eqnarray*}
thus, $[\![\;,\;]\!]'$ is a right Loday bracket.\\
Analogously, given a right Loday bracket we can define a left Loday one, obtaining a correspondence 
between left and right Loday algebras.

\begin{pro}If $(\mathcal{A},\cdot)$ is an associative $\mathcal{R}-$algebra\footnote{That is, $\mathcal{A}$ is an $\mathcal{R}-$module
endowed with an associative product $\cdot :\mathcal{A}\times\mathcal{A}\rightarrow\mathcal{A}$} and, moreover,
is endowed with an $\mathcal{R}-$linear mapping
 $D:\mathcal{A}\rightarrow\mathcal{A}$ verifying
$$D(a\cdot (D(b)))=D(a)\cdot D(b)=D((D(a))\cdot b),\;\;\;\forall a,b\in\mathcal{A},$$
then we can define:
\begin{eqnarray*}
[\;,\;]:\mathcal{A}\times\mathcal{A}&\longrightarrow&\mathcal{A}\\
(a,b)&\longmapsto&[a,b]:=D(a)\cdot b-b\cdot D(a),
\end{eqnarray*}
which satisfies the properties of $\mathcal{R}-$bilinearity and the left Leibniz rule (so, it is a left Loday algebra).
\end{pro}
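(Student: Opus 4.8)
The plan is to verify the two required properties directly: $\mathcal{R}$-bilinearity, which is immediate, and the left Leibniz identity (iii), which requires an explicit computation. The bracket is $[a,b] = D(a)\cdot b - b\cdot D(a)$, so bilinearity over $\mathcal{R}$ follows at once from the $\mathcal{R}$-linearity of $D$ and the $\mathcal{R}$-bilinearity of the associative product $\cdot$. I would dispose of this in one line and concentrate on the Leibniz rule, which is where the hypothesis on $D$ must be used.

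For the main step, I would expand each of the three terms in the identity $[a,[b,c]] = [[a,b],c] + [b,[a,c]]$ using the definition. First I would compute $[b,c] = D(b)\cdot c - c\cdot D(b)$ and then $[a,[b,c]] = D(a)\cdot(D(b)\cdot c - c\cdot D(b)) - (D(b)\cdot c - c\cdot D(b))\cdot D(a)$. The crucial observation is that whenever the outer $D$ acts on an argument that is itself a bracket, the inner expression is sandwiched by $D(a)$ on one side; the hypothesis $D(x\cdot D(y)) = D(x)\cdot D(y) = D(D(x)\cdot y)$ lets me replace terms like $D([a,b]) = D(D(a)\cdot b - b\cdot D(a))$ by $D(a)\cdot D(b) - D(b)\cdot D(a)$, collapsing the double-$D$ structure. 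I would expand $[[a,b],c]$ and $[b,[a,c]]$ the same way, applying associativity freely to rearrange the products of three factors.

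\textbf{The main obstacle} will be the bookkeeping: once everything is expanded, each side is a sum of products of three elements drawn from $\{D(a), D(b), D(c), a, b, c\}$, and I expect roughly eight such monomials per side. The identity $D(x\cdot D(y)) = D(D(x)\cdot y)$ is exactly what is needed to match a term carrying $D$ on the \emph{left} of a product against one carrying $D$ on the \emph{right}, so the verification reduces to pairing up monomials and checking that the hypothesis forces each unpaired ``double-$D$'' term to agree. I anticipate that after using associativity to normalize the order of factors and substituting the hypothesis to simplify every occurrence of $D$ applied to a product, the difference of the two sides telescopes to zero. The only real care required is tracking signs, since each bracket contributes a commutator-like difference and the three nested brackets produce sign patterns that must cancel in pairs.
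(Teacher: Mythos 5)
Your proposal is correct and matches the paper's own proof essentially step for step: dispose of $\mathcal{R}$-bilinearity by direct expansion, then verify the left Leibniz identity by expanding all three brackets, using the hypothesis (together with additivity of $D$) to replace $D([a,b])$ by $D(a)\cdot D(b)-D(b)\cdot D(a)$, and cancelling the resulting monomials via associativity. The only slight imprecision is in your anticipation of the bookkeeping: after the substitution, every monomial is a product of the three factors $D(a)$, $D(b)$, $c$ (neither $D(c)$ nor bare $a$, $b$ ever appears), and the difference consists of twelve such terms cancelling in pairs, exactly as in the paper.
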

\begin{proof}
Let us check first the $\mathcal{R}-$bilinearity
\begin{eqnarray*}
[\alpha a+\beta b, c]&=&D(\alpha a+\beta b)\cdot c-c\cdot D(\alpha a+\beta b)\\
&=& \alpha D(a)\cdot c+\beta D(b)\cdot c-\alpha c\cdot D(a)-\beta c\cdot D(b)\\
&=& \alpha (D(a)\cdot c-c\cdot D(a))+\beta (D(b)\cdot c-c\cdot D(b))\\
&=& \alpha [a,c]+\beta [b,c]
\end{eqnarray*}
\begin{eqnarray*}
[a,\beta b+\gamma c]&=&D(a)\cdot (\beta b+\gamma c)-(\beta b+\gamma c)\cdot D(a)\\
&=& \beta D(a)\cdot b+\gamma D(a)\cdot c-\beta b\cdot D(a)-\gamma c\cdot D(a)\\
&=& \beta (D(a)\cdot b-b\cdot D(a))+\gamma (D(a)\cdot c-c\cdot D(a))\\
&=& \beta [a,b]+\gamma [a,c]
\end{eqnarray*}
For the left Leibniz rule, we have:
\begin{eqnarray*}
&&[a,[b,c]\;]-[\;[a,b],c]-[b,[a,c]\;]\\
&=&D(a)\cdot [b,c]-[b,c]\cdot D(a)\\
&&-D([a,b])\cdot c+c\cdot D[a,b]\\
&&-D(b)\cdot [a,c]+[a,c]\cdot D(b)\\
&=& D(a)\cdot (D(b)\cdot c-c\cdot D(b))-(D(b)\cdot c-c\cdot D(b))\cdot D(a)\\
&&-D(D(a)\cdot b-b\cdot D(a))\cdot c+c\cdot D(D(a)\cdot b-b\cdot D(a))\\
&&-D(b)\cdot (D(a)\cdot c-c\cdot D(a))+(D(a)\cdot c-c\cdot D(a))\cdot D(b)\\
&=& D(a)\cdot D(b)\cdot c-D(a)\cdot c\cdot D(b)-D(b)\cdot c\cdot D(a)\\
&& +c\cdot D(b)\cdot D(a)-D(a)\cdot D(b)\cdot c+D(b)\cdot D(a)\cdot c\\
&& +c\cdot D(a)\cdot D(b)-c\cdot D(b)\cdot D(a)-D(b)\cdot D(a)\cdot c\\
&&+D(b)\cdot c\cdot D(a)+ D(a)\cdot c\cdot D(b)-c\cdot D(a)\cdot D(b)=0
\end{eqnarray*}
\end{proof}

\begin{ejem}Some examples of such mappings $D:\mathcal{A}\rightarrow\mathcal{A}$ are:
\begin{enumerate}[(a)]
\item The identity $D=Id$. In this particular case we obtain a Lie algebra.
\item\label{diff} A zero-square derivation $D$. Indeed, if this is the case,
$$D(a\cdot D(b))=D(a)\cdot D(b)+a\cdot D^2(b)= D(a)\cdot D(b)\;\;\;\forall a,b\in\mathcal{A}.$$
\item A projector $D$, that is, $D$ is an algebra morphism and $D^2=D$. Then:
$$D(a\cdot D(b))=D(a)\cdot D^2(b)=D(a) \cdot D(b)\;\;\;\forall a,b\in\mathcal{A}.\\$$
\end{enumerate}
\end{ejem}
Now, we can give a simple example of a left Loday quasi-algebroid which does not admit a right Loday quasi-algebroid structure.
\begin{ejem}\label{e1}
Consider $\mathcal{F}=\Omega(\mathbb{R}^6)$, which is an $\mathbb{R}$-algebra with the exterior product $\wedge$ and, moreover, a $\mathcal{C}^{\infty}(\mathbb{R}^6)$-module (i.e $\mathcal{R}=\mathbb{R},\,\mathcal{A}=\mathcal{C}^{\infty}(M)$). Define:
$$[\![\alpha,\beta]\!]=d(\alpha)\wedge \beta-\beta\wedge d(\alpha)=(1-(-1)^{|\beta|\!|\alpha+1|})d(\alpha)\wedge\beta.$$
It is immediate that $(\Omega(\mathbb{R}^6),[\![\;,\;]\!])$ is a left Loday algebra, as $d$ is a square-zero operator (see \eqref{diff} above). Now, we have, for any $\alpha,\,\beta\in\Omega(\mathbb{R}^6),\;f\in\mathcal{C}^{\infty}(\mathbb{R}^6)$:
\begin{eqnarray*}
[\![\alpha, f.\beta]\!] & = & (1-(-1)^{|\beta|\!|\alpha+1|})d(\alpha)\wedge f.\beta\\
&=& f.(1-(-1)^{|\beta|\!|\alpha+1|})d(\alpha)\wedge\beta \\
&=& f.[\![\alpha, \beta]\!]
\end{eqnarray*}
Thus, on the other hand, if $f=x_6$, $\alpha=dx_1\wedge dx_2\wedge dx_3\wedge dx_4$, $\beta=dx_5$:
\begin{eqnarray*}
[\![f.\alpha,\beta]\!]&=&(1-(-1)^{|1|\!|5|})d(x_6.dx_1\wedge dx_2\wedge dx_3\wedge dx_4)\wedge dx_5\\
&=& -2 dx_1\wedge dx_2\wedge dx_3\wedge dx_4\wedge dx_5\wedge dx_6
\end{eqnarray*}
but
$$
f.[\![\alpha,\beta]\!]= (1-(-1)^{|1|\!|5|})x_6.d(dx_1\wedge dx_2\wedge dx_3\wedge dx_4)\wedge dx_5 =0
$$
and there is no $q^L_{\Omega(\mathbb{R}^6)}:\Omega(\mathbb{R}^6)\longrightarrow \mathfrak{X}(\mathbb{R}^6)$ such that
$$q^L_{\Omega(\mathbb{R}^6)}(\alpha)(f).dx_5=-2dx_1\wedge dx_2\wedge dx_3\wedge dx_4\wedge dx_5\wedge dx_6.$$
Thus, $(\Omega(\mathbb{R}^6),[\![\;,\;]\!])$ has a left Loday quasi-algebroid structure, with anchor $q^L_{\Omega(\mathbb{R}^6)}\equiv 0$, but it does not admit a right Loday quasi-algebroid structure. Note that $q^L_{\Omega(\mathbb{R}^6)}$ is (trivially) tensorial.
\end{ejem}

The following, less trivial, example was suggested to us by Y. Sheng. It shows that the kind of structures we are considering can appear in the more general context of
higher order Courant algebroids (although here we just take $n=1$ for simplicity) through the associated Dorfman bracket, see \cite{Sheng1}.
\begin{ejem}
Let $M$ be a differential manifold. Consider the vector bundle $TM\oplus T^*M$ whose sections are endowed with the Dorfman bracket:
$$\begin{array}{cccc}
[\![\;,\;]\!]:  & TM\oplus T^*M \times TM\oplus T^*M  & \longrightarrow & TM\oplus T^*M\\
   & (X+\alpha,Y+\beta)&\longmapsto & [X,Y]+\mathcal{L}_X\beta-\iota_Yd\alpha\\
\end{array}$$
Then we have a left Loday algebra, as $[\![\;,\;]\!]$ is clearly $\mathbb{R}-$bilinear and
\begin{equation*}\label{ll}
\begin{split}
& [\![ [\![X+\alpha, Y+\beta ]\!],Z+\gamma ]\!]+[\![ Y+\beta, [\![ X+\alpha, Z+\gamma ]\!] ]\!]\\
= & [[X,Y],Z]+\mathcal{L}_{[X,Y]}\gamma -\iota _Zd(\mathcal{L}_X\beta -\iota _Yd\alpha )+[Y,[X,Z]]+\mathcal{L}_Y(\mathcal{L}_X\gamma -\iota _Zd\alpha )-\iota _{[X,Z]}d\beta \\
= & [X,[Y,Z]]+\mathcal{L}_{[X,Y]}\gamma+\mathcal{L}_Y(\mathcal{L}_X\gamma) -\iota _Zd(\mathcal{L}_X\beta)-\iota _{[X,Z]}d\beta+\iota _Zd(\iota _Yd\alpha )-\mathcal{L}_Y(\iota _Zd\alpha )\\
= & [X,[Y,Z]]+\mathcal{L}_X(\mathcal{L}_Y\gamma)-\mathcal{L}_{X}\iota _Zd\beta-\iota _{[Y,Z]}d\alpha \\
= & [X,[Y,Z]]+\mathcal{L}_X((\mathcal{L}_Y\gamma)-\iota _Zd\beta)-\iota _{[Y,Z]}d\alpha \\
= & [\![ X+\alpha,[\![ Y+\beta,Z+\gamma ]\!] ]\!],
\end{split}
\end{equation*}
for all $X+\alpha,\; Y+\beta,\;Z+\gamma\in TM\oplus T^*M$. Moreover, $ad_{X+\alpha}^L\in QDer_{\mathbb{R}}(TM\oplus T^*M)$: Let $f\in \mathcal{C}^{\infty}(M)$ and $Y+\beta\in TM\oplus T^*M$. Then,
\begin{equation*}\label{ll}
\begin{split}
[ad_{X+\alpha}^L , \mu _f](Y+\beta)&=  [\![ X+\alpha,f.(Y+\beta)]\!]-f.[\![ X+\alpha,Y+\beta ]\!]\\
&=[X, f.Y]+ \mathcal{L}_X(f\alpha)-\iota_{f.Y}d\alpha-f[X,Y]-f\mathcal{L}_X\beta +f \iota _yd\alpha\\
&=X(f).Y + \mathcal{L}_X(f\beta)-f\mathcal{L}_X\beta + f\iota _Yd\alpha -\iota _{fY}d\alpha\\
&=X(f).Y+X(f).\beta\\
&=X(f).(Y+\beta)\\
&=\mu_{X(f)}(Y+\beta)\\
&=\widehat{ad_{X+\alpha}^L}(f).(Y+\beta),
\end{split}
\end{equation*}
so $(TM\oplus T^*M, [\![\;,\;]\!])$ is a left Loday quasi-algebroid, with anchor map the projection onto the first factor:
$$ q^L_{TM\oplus T^*M}(X+\alpha)=\widehat{ad_{X+\alpha}^L}=X. $$
Note that in this case the anchor is tensorial: If $f,g\in\mathcal{C}^{\infty}(M)$ and $X+\alpha\in TM\oplus T^*M$, then
\begin{equation*}
[q^L_{TM\oplus T^*M},\mu_f](X+\alpha)(g)=\widehat{ad_{f.(X+\alpha)}^L}(g)-f.\widehat{ad_{X+\alpha}^L}(g)=0,
\end{equation*} 
so $(TM\oplus T^*M,[\![\;,\;]\!])$ is indeed a left Loday algebroid. However, for $ad_{X+\alpha}^R$ we find:
\begin{equation*}\label{ll}
\begin{split}
[ad_{X+\alpha}^R , \mu _f](Y+\beta)&=  B(f(Y+\beta),X+\alpha)-f.B(Y+\beta,X+\alpha)\\
&=[f.Y,X]+\mathcal{L}_{f.Y}\alpha -\iota _Xd(f.\beta)-f.[X,Y]-f\mathcal{L}_Y\alpha +f\iota _Xd\beta\\
&=-X(f).Y+\mathcal{L}_{f.Y}\alpha -\iota _Xd(f.\beta)-f\mathcal{L}_Y\alpha +f\iota _Xd\beta
\end{split}
\end{equation*}
and the term $\mathcal{L}_{f.Y}\alpha -\iota _Xd(f.\beta)-f\mathcal{L}_Y\alpha +f\iota _Xd\beta$ clearly spoils the
possibility that $ad_{X+\alpha}^R$ be a quasi-derivation.
\end{ejem}

\section{Left omni-Loday algebroids and omni-Lie algebroids}
Having established  the non-triviality of left Loday quasi-algebroids, we now turn to the question of whether an analogue of Weinstein's omni-Lie algebra exists for these structures.
As before, let $\mathcal{A}$  be an associative algebra, commutative and with unit element $1_{\mathcal{A}}$ over a ring $\mathcal{R}$, commutative and with unit element $1_{\mathcal{R}}$. Also, let $\mathcal{F}$ a faithful $\mathcal{A}$-module.
\begin{defi}\label{d4}
Consider the product space $\mathfrak{gl}(\mathcal{F})\times\mathcal{F}$ and define the bracket
\begin{eqnarray*}
\{\;,\;\}:(\mathfrak{gl}(\mathcal{F})\times\mathcal{F})\times (\mathfrak{gl}(\mathcal{F})\times\mathcal{F})&\longrightarrow&\mathfrak{gl}(\mathcal{F})\times\mathcal{F}\\
((\Phi ,X),(\Psi ,Y))&\longmapsto&\{(\Phi ,X),(\Psi ,Y)\}:=([\Phi ,\Psi ], \Phi Y)
\end{eqnarray*}
where $[\;,\;]$ is he commutator of endomorphisms.
\end{defi}
\begin{nota}
It is straightforward to check that $\{\;,\;\}$ is $\mathcal{R}$-bilinear. However, it does not satisfy Jacobi's identity (here
$\circlearrowleft$ denotes ciclic sum), as we have:
$$\circlearrowleft\{(\Phi ,X),\{(\Psi ,Y),(\Upsilon ,Z)\}\}=(0, \Phi (\Psi Z)+\Upsilon (\Phi Y)+\Psi (\Upsilon X)).$$
As stated in the introduction, the bracket $\{\;,\;\}$ satisfies instead the left Leibniz identity:
$$
\{(\Phi ,X),\{(\Psi ,Y),(\Upsilon ,Z)\}\}=\{\{(\Phi ,X),(\Psi ,Y)\},(\Upsilon ,Z)\}+\{(\Psi ,Y),\{(\Phi ,X),(\Upsilon ,Z)\}\}
$$
\end{nota}
Now, let $B:\mathcal{F}\times\mathcal{F}\longrightarrow\mathcal{F}$ be an $\mathcal{R}$-bilinear form. Define the ``graph" of B as
\begin{equation*}
\mathcal{F}_B:=\{(ad^L_X,X):X\in\mathcal{F}\}\subseteq \mathfrak{gl}(\mathcal{F})\times\mathcal{F},
\end{equation*}
where
\begin{eqnarray*}
ad_X^L:\mathcal{F}&\longrightarrow&\mathcal{F}\\
Y&\longmapsto&B(X,Y)
\end{eqnarray*}
\begin{pro}\label{p1}
The graph $\mathcal{F}_B$ is closed under $\{\;,\;\}$ if and only if $(\mathcal{F},B)$ is a left Loday algebra. Moreover, if $B$ 
is antisymmetric, $\mathcal{F}_B$ is closed if and only if $(\mathcal{F},B)$ is a Lie algebra.
\end{pro}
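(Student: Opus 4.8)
The plan is to reduce the closure of $\mathcal{F}_B$ to the single operator identity $[ad^L_X,ad^L_Y]=ad^L_{B(X,Y)}$, which was already shown in the proof of Theorem \ref{t2} to encode the left Leibniz rule. First I would compute the bracket of two generic elements of the graph using Definition \ref{d4}: since $ad^L_X(Y)=B(X,Y)$,
$$\{(ad^L_X,X),(ad^L_Y,Y)\}=([ad^L_X,ad^L_Y],ad^L_X(Y))=([ad^L_X,ad^L_Y],B(X,Y)).$$

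Next I would pin down exactly when this element lies back in $\mathcal{F}_B$. A pair $(\Phi,Z)\in\mathfrak{gl}(\mathcal{F})\times\mathcal{F}$ belongs to $\mathcal{F}_B$ precisely when $\Phi=ad^L_Z$; indeed the map $X\mapsto(ad^L_X,X)$ is injective because its second component recovers $X$, so the graph is the image of a section and the second coordinate of a point of $\mathcal{F}_B$ forces its first coordinate. Applying this to the bracket just computed, whose second coordinate is $B(X,Y)$, I conclude that $\mathcal{F}_B$ is closed under $\{\;,\;\}$ if and only if
$$[ad^L_X,ad^L_Y]=ad^L_{B(X,Y)}\qquad\text{for all }X,Y\in\mathcal{F}.$$

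I would then recognize this operator identity as a restatement of the Loday condition. Evaluating both sides on an arbitrary $Z\in\mathcal{F}$ reproduces, term by term (exactly as in the chain of equalities proving Theorem \ref{t2}), the left Leibniz identity
$$[\![X,[\![Y,Z]\!]\;]\!]=[\![\;[\![X,Y]\!],Z]\!]+[\![Y,[\![X,Z]\!]\;]\!]$$
for $[\![\;,\;]\!]=B$. Since two $\mathcal{R}$-endomorphisms of $\mathcal{F}$ agree iff they agree on every element, the displayed operator identity (for all $X,Y$) is equivalent to the left Leibniz identity (for all $X,Y,Z$), i.e. to $(\mathcal{F},B)$ being a left Loday algebra; this proves the first assertion. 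For the second, I would invoke the observation recorded in the Introduction that, when $B$ is antisymmetric, the left Leibniz identity is equivalent to the Jacobi identity, so that under antisymmetry the chain ``$\mathcal{F}_B$ closed $\Leftrightarrow$ left Loday $\Leftrightarrow$ Jacobi'' identifies closure with $(\mathcal{F},B)$ being a Lie algebra.

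I do not expect any genuine obstacle: the computation of the bracket and its unwinding into the Leibniz identity are routine and have essentially been carried out already in Theorem \ref{t2}. The only point deserving a moment's care is the ``only if'' direction, where one must use the injectivity of $X\mapsto(ad^L_X,X)$ to guarantee that closure forces the first component to be $ad^L_{B(X,Y)}$, and not merely to coincide with $ad^L_W$ for some unrelated $W$.
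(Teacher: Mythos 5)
Your proposal is correct and follows essentially the same route as the paper: compute $\{(ad^L_X,X),(ad^L_Y,Y)\}=([ad^L_X,ad^L_Y],B(X,Y))$, observe that membership in $\mathcal{F}_B$ forces the first component to equal $ad^L_{B(X,Y)}$, and unwind this operator identity pointwise into the left Leibniz identity, with the antisymmetric case following from the equivalence of Leibniz and Jacobi under antisymmetry. If anything, you are slightly more careful than the paper, which leaves implicit the step that the second coordinate of a graph element determines its first, and which omits the antisymmetric case from its written proof entirely.
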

\begin{proof}
$\mathcal{F}_B$ is closed with respect to $\{\;,\;\}$ if, and only if, for all $X,Y\in \mathcal{F}$ we have:
$$\{(ad_X^L,X)\;,\;(ad_Y^L,Y)\}=([ad_X^L,ad_Y^L]\;,\;ad_X^L(Y))=(ad_{B(X,Y)}^L,B(X,Y)),$$
that is, for all $Z\in \mathcal{F}$:
$$[ad_X^L,ad_Y^L](Z)=ad_{B(X,Y)}^L(Z),$$
or,
$$B(X,B(Y,Z))-B(Y,B(X,Z))=B(B(X,Y),Z)\;\;\;\;\;\forall \;X,Y,Z\in\mathcal{F},$$
or equivalently,
$$B(B(X,Y),Z)+B(Y,B(X,Z))=B(X,B(Y,Z))\;\;\;\;\;\forall \;X,Y,Z\in\mathcal{F},$$
which is the left Leibniz identity, i.e, $(\mathcal{F},B)$ is a left Loday algebra.
\end{proof}
For left Loday quasi-algebroids, we have the following.
\begin{teo}\label{t3}
Let $B:\mathcal{F}\times\mathcal{F}\longrightarrow\mathcal{F}$ be an $\mathcal{R}$-bilinear form and $\rho:\mathcal{F}\longrightarrow Der_{\mathcal{R}}(\mathcal{A})$ a morphism of $\mathcal{R}$-modules. Then, $(\mathcal{F},B)$ is a left Loday quasi-algebroid with anchor map $\rho$, if and only if $\mathcal{F}_B$ is closed with respect to $\{\;,\;\}$ and $B$ is such that
\begin{equation*}
B(X,f.Z)=f.B(X,Z)+\rho(X)(f).Z,
\end{equation*}
(that is, $ad^L_X\in QDer_{\mathcal{R}}(\mathcal{F})$).
\end{teo}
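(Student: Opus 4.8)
Theorem \ref{t3} is an equivalence combining two separate conditions, so I would prove it by handling each direction as a conjunction of two independent pieces. The statement essentially says: $(\mathcal{F},B)$ is a left Loday quasi-algebroid with anchor $\rho$ iff (a) $\mathcal{F}_B$ is $\{\;,\;\}$-closed, and (b) the quasi-derivation identity $B(X,f.Z)=f.B(X,Z)+\rho(X)(f).Z$ holds. By Definition \ref{d1} and Definition \ref{d2}, being a left Loday quasi-algebroid means precisely that $B$ is a left Loday bracket \emph{and} that each $ad^L_X$ is a quasi-derivation, with $\rho$ identified as the anchor. So the whole proof reduces to matching these two defining conditions against (a) and (b) respectively, and then checking that the $\rho$ appearing in (b) really is the canonical anchor $\widehat{ad^L_X}$.

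The plan is as follows. First I would invoke Proposition \ref{p1} directly to dispose of the ``Loday bracket'' half: it states that $\mathcal{F}_B$ is closed under $\{\;,\;\}$ if and only if $(\mathcal{F},B)$ is a left Loday algebra. This is exactly condition (a), so no recomputation is needed. Second, I would address the quasi-derivation half. By the computation displayed just after Definition \ref{d1}, the condition $ad^L_X\in QDer_{\mathcal{R}}(\mathcal{F})$ is equivalent to $[\![X,f.Y]\!]-f.[\![X,Y]\!]=\widehat{ad^L_X}(f).Y$ for all $Y$, which in the notation $B=[\![\;,\;]\!]$ reads $B(X,f.Z)=f.B(X,Z)+\widehat{ad^L_X}(f).Z$. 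Setting $\rho(X)=\widehat{ad^L_X}=q_{\mathcal{F}}^L(X)$ (the anchor of Definition \ref{d2}) this is exactly condition (b). The two directions then assemble: if $(\mathcal{F},B)$ is a left Loday quasi-algebroid, both (a) and (b) hold with $\rho=q_{\mathcal{F}}^L$; conversely, (a) gives the Loday bracket and (b) gives membership in $QDer_{\mathcal{R}}(\mathcal{F})$, so the structure is a left Loday quasi-algebroid.

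The one genuinely substantive point—and the step I expect to require the most care—is the identification of the abstract $\rho$ in the hypothesis with the canonical anchor $\widehat{ad^L_X}$. The theorem hands us an \emph{arbitrary} $\mathcal{R}$-module morphism $\rho$, whereas Definition \ref{d2} produces a \emph{specific} map $\widehat{ad^L_X}$ via Theorem \ref{t1}. I would argue that once (b) holds with a given $\rho$, we have $[ad^L_X,\mu_f]=\mu_{\rho(X)(f)}$ as endomorphisms of $\mathcal{F}$; by Theorem \ref{t1} we also have $[ad^L_X,\mu_f]=\mu_{\widehat{ad^L_X}(f)}$. Since $\mathcal{F}$ is a \emph{faithful} $\mathcal{A}$-module, the map $g\mapsto\mu_g$ is injective, forcing $\rho(X)(f)=\widehat{ad^L_X}(f)$ for all $f$, hence $\rho=q_{\mathcal{F}}^L$. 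This faithfulness argument is what pins down uniqueness of the anchor and is the crux that makes the equivalence clean rather than merely an ``if there exists $\rho$'' statement.

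A short remark I would append: the theorem is really a repackaging of the preceding definitions through the graph construction, so beyond Proposition \ref{p1} and the faithfulness observation there is nothing computational to grind; the content is conceptual, showing that closure in the ``universal space'' $\mathfrak{gl}(\mathcal{F})\times\mathcal{F}$ plus the Leibniz-type anchor condition jointly characterize the structure.
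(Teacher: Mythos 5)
Your proposal is correct and follows essentially the same route as the paper's own proof: Proposition \ref{p1} disposes of the Loday-bracket half, and the quasi-derivation identity $[ad^L_X,\mu_f]=\mu_{\rho(X)(f)}$ is matched directly against Definitions \ref{d1} and \ref{d2} for the other half. Your faithfulness argument pinning down $\rho=\widehat{ad^L_X}$ as the canonical anchor is a point the paper leaves implicit (it simply declares $\rho$ to be the anchor once the identity holds), so it is a small gain in rigor rather than a different method.
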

\begin{proof}
If $(\mathcal{F},\;B)$ is a left Loday quasi-algebroid, it is also a left Loday algebra and then,
by Proposition \ref{p1}, $\mathcal{F}_B$ is closed under $\{\;,\;\}$. 
On the other hand, the condition of being quasi-algebroid implies that for all $X,Z\in \mathcal{F}$ and
for all $f\in \mathcal{A}$, we have
\begin{equation*}
[ad_X^L,\mu_f](Z)=\rho (X)(f).Z,
\end{equation*}
that is,
\begin{equation*}
B(X,f.Z)=f.B(X,Z)+\rho (X)(f).Z.
\end{equation*}
For the second implication, consider $\mathcal{F}_B$ closed with respect to $\{\;,\;\}$, so $(\mathcal{F},B)$ is 
a left Loday algebra (see Proposition \ref{p1}).
Moreover, for all $X,Z\in \mathcal{F}$ and for all $f\in \mathcal{A}$:
\begin{eqnarray*}
[ad_X^L,\mu_{f}](Z)&=&B(X,f.Z)-f.B(X,Z)\\
&=&f.B(X,Z)+\rho(X)(f).Z-f.B(X,Z)\\
&=&\rho(X)(f).Z\\
&=&\mu_{\rho(X)(f)}(Z),
\end{eqnarray*}
so $$[ad_X^L,\mu_f]=\mu_{\rho(X)(f)}$$
that is, $ad_X^L$ is a quasi-derivation for all $X\in \mathcal{F}$.
Thus, $(\mathcal{F},\;B)$ is a left quasi-algebroid with anchor map $\rho$.
\end{proof}
Let us try to get rid of the ``quasi'' prefix.
\begin{teo}\label{t4}
Let $B:\mathcal{F}\times\mathcal{F}\longrightarrow\mathcal{F}$ be an $\mathcal{R}$-bilinear form and $\rho:\mathcal{F}\longrightarrow Der_{\mathcal{R}}(\mathcal{A})$ a morphism of $\mathcal{R}$-modules. Suposse that $\mathcal{F}_B$ is closed with respect to $B$ and that $B$ is such that
\begin{enumerate}[(a)]
\item\label{conda} $B(X,f.Z)=f.B(X,Z)+\rho(X)(f).Z$
\item\label{condb} $B(f.X,Z)=f.B(X,Z)-\rho(Z)(f).X$
\end{enumerate}
(that is, $ad^L_X\in QDer_{\mathcal{R}}(\mathcal{F})$). Then $(\mathcal{F},B,\rho)$ is a left Loday algebroid.
\end{teo}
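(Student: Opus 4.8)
The plan is to regard Theorem~\ref{t4} as the statement that, under the extra hypothesis (b), the quasi-algebroid produced by Theorem~\ref{t3} is in fact an algebroid. First I would apply Theorem~\ref{t3}: since $\mathcal{F}_B$ is closed under $\{\;,\;\}$ and condition (a) holds, $(\mathcal{F},B)$ is already a left Loday quasi-algebroid with anchor $\rho$, i.e. $ad^L_X\in QDer_{\mathcal{R}}(\mathcal{F})$ and $\widehat{ad^L_X}=\rho(X)$ for every $X$. By Definition~\ref{d2} the only thing left to prove is that this anchor is tensorial, that is, that $\rho$ is $\mathcal{A}$-linear: $\rho(f.X)=f.\rho(X)$ for all $f\in\mathcal{A}$, $X\in\mathcal{F}$.

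Next I would unpack condition (b). Computing $[ad^R_Z,\mu_f](X)=B(f.X,Z)-f.B(X,Z)=-\rho(Z)(f).X=\mu_{-\rho(Z)(f)}(X)$ shows that it is exactly the requirement $ad^R_Z\in QDer_{\mathcal{R}}(\mathcal{F})$ with $\widehat{ad^R_Z}=-\rho(Z)$. Thus both adjoint families are quasi-derivations; this is the algebraic counterpart of Grabowski's $QD$-Loday condition (Theorems~\ref{teograb1} and \ref{teograb2}), and the tensoriality I am after is the analogue of his passage from a $QD$-Loday algebroid to a Loday algebroid.

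The heart of the argument would be to expand $B(f.X,g.Z)$ in two ways. Applying (a) on the right entry and then (b) on the left entry gives $B(f.X,g.Z)=gf.B(X,Z)-g.\rho(Z)(f).X+\rho(f.X)(g).Z$, whereas applying (b) first and then (a) gives $B(f.X,g.Z)=fg.B(X,Z)+f.\rho(X)(g).Z-\rho(g.Z)(f).X$. Commutativity of $\mathcal{A}$ cancels the quadratic terms and leaves the master identity
\[
\big(\rho(f.X)(g)-f.\rho(X)(g)\big).Z=\big(g.\rho(Z)(f)-\rho(g.Z)(f)\big).X
\]
for all $f,g\in\mathcal{A}$, $X,Z\in\mathcal{F}$. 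Setting $T(f,X):=\rho(f.X)-f.\rho(X)\in Der_{\mathcal{R}}(\mathcal{A})$ (the tensoriality defect), this becomes $T(f,X)(g).Z=-T(g,Z)(f).X$, and the theorem is equivalent to $T\equiv 0$.

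Finally I would extract $T=0$ from this identity using the faithfulness of $\mathcal{F}$. Substituting $h.Z$ for $Z$ and comparing with the identity itself forces $T(g,h.Z)(f).X=h.T(g,Z)(f).X$ for all $X$, hence $T(g,h.Z)=h.T(g,Z)$; so the maps $p:X\mapsto T(f,X)(g)$ and $q:Z\mapsto T(g,Z)(f)$ are $\mathcal{A}$-linear functionals in $Hom_{\mathcal{A}}(\mathcal{F},\mathcal{A})$, and the master identity reads $p(X).Z=-q(Z).X$. On the diagonal $f=g$, $Z=X$ it already gives $2\,T(f,X)(f).X=0$. The main obstacle is the last implication: passing from the single tensorial relation $p(X).Z=-q(Z).X$ to the pointwise vanishing $p=q=0$. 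This is exactly where faithfulness (non-degeneracy of $\mathcal{F}$ as an $\mathcal{A}$-module) must be used, playing here the role that the rank hypothesis plays in Grabowski's Theorem~\ref{teograb1}; if necessary I would reinforce it with the fact, from Theorem~\ref{t2}, that $\rho$ is a morphism of Loday algebras. Once $T\equiv 0$, $\rho$ is tensorial and $(\mathcal{F},B,\rho)$ is a left Loday algebroid.
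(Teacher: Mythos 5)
Your reduction and your computation are correct as far as they go, and they are in fact \emph{more} careful than the paper's own proof; but the obstacle you flag at the end is not a repairable technicality. The statement itself is false as given, so no use of faithfulness (nor of Theorem~\ref{t2}) can close the gap. Concretely, take $\mathcal{R}=\mathbb{R}$, $\mathcal{A}=\mathcal{F}=\mathcal{C}^{\infty}(\mathbb{R}^2)$ (a faithful module over itself), let $B$ be the canonical Poisson bracket
\[
B(X,Z)=\{X,Z\}=\frac{\partial X}{\partial q}\frac{\partial Z}{\partial p}-\frac{\partial X}{\partial p}\frac{\partial Z}{\partial q},
\]
and let $\rho(X)=\{X,\cdot\}$ be the Hamiltonian vector field of $X$. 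Then $B$ is a Lie bracket, hence a left Loday bracket, so $\mathcal{F}_B$ is closed; the Leibniz rule for $\{\;,\;\}$ gives exactly hypotheses (a) and (b); and yet $\rho$ is not tensorial: $\rho(q.1)=\{q,\cdot\}=\partial/\partial p$ while $q.\rho(1)=0$. In this example your defect tensor is $T(f,X)=X\{f,\cdot\}\neq 0$, and it \emph{does} satisfy your master identity --- both sides of $T(f,X)(g).Z=-T(g,Z)(f).X$ equal $XZ\{f,g\}$ --- so the implication ``master identity $\Rightarrow T\equiv 0$'' genuinely fails, however faithfulness is exploited. (Indeed, when $\mathcal{F}=\mathcal{A}$ the relation $p(X).Z=-q(Z).X$ always has the nonzero solutions $p=\mu_a$, $q=-\mu_a$ with $a\in\mathcal{A}$; the Poisson bracket realizes exactly this, with $a=\{f,g\}$.) This is the same phenomenon that forces the rank restriction in Grabowski's work: rank-one QD-structures are Jacobi/Poisson-type brackets rather than algebroids, which is why his theorem requires rank $>1$ (the ``$\geq 1$'' in Theorem~\ref{teograb1} as transcribed in the paper is a misprint), whereas Theorem~\ref{t4} imposes no rank-type hypothesis at all.

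The comparison with the paper is instructive: its proof breaks down at precisely the step you declined to take. The first line of the paper's computation reads $B(f.X,g.Z)=f.B(X,g.Z)-g\rho(Z)(f).X$, but hypothesis (b), applied to the pair $(X,g.Z)$, actually gives $B(f.X,g.Z)=f.B(X,g.Z)-\rho(g.Z)(f).X$; replacing $\rho(g.Z)(f)$ by $g\rho(Z)(f)$ is exactly the $\mathcal{A}$-linearity of $\rho$ that the theorem is supposed to establish, so the paper's argument is circular. What the hypotheses really yield is what you proved: $(\mathcal{F},B)$ is a left Loday quasi-algebroid with anchor $\rho$ (Theorem~\ref{t3}); condition (b) says $ad^R_Z\in QDer_{\mathcal{R}}(\mathcal{F})$ with $\widehat{ad^R_Z}=-\rho(Z)$; and the two expansions of $B(f.X,g.Z)$ give the master identity --- nothing more. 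Tensoriality needs an additional hypothesis (such as Grabowski's rank $>1$ in the vector-bundle setting); as stated, Theorem~\ref{t4} (and, by the same counterexample, Theorem~\ref{t5}) is false, and your stalled attempt is what exposes this.
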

\begin{proof}
We only need to prove that $\rho$ is tensorial or, equivalenty, that for all $X,Z\in \mathcal{F}$ and for all $g,f\in \mathcal{A}$
the following holds:
\begin{equation*}
B(f.X,g.Z)-g.B(f.X,Z)=f.B(X,g.Z)-fg.B(X,Z).
\end{equation*}
But the hypothesis in the statement guarantee that:
\begin{eqnarray*}
B(f.X,g.Z)-g.B(f.X,Z)&=&f.B(X,g.Z)-g\rho(Z)(f).X-g.B(f.X,Z)\\
&=& fg.B(X,Z)+f\rho(X)(g).Z-g\rho(Z)(f).X\\
&&-fg.B(X,Z)+g\rho(Z)(f).X\\
&=& f\rho(X)(g).Z
\end{eqnarray*}
and
\begin{eqnarray*}
f.B(X,g.Z)-fg.B(X,Z)&=& fg.B(X,Z)+f\rho(X)(g).Z-fg.B(X,Z)\\
&=& f\rho(X)(g).Z.
\end{eqnarray*}
Thus, $(\mathcal{F},\;B,\;\rho)$ is a left Loday algebroid.
\end{proof}
\begin{nota}
However, we can not say anything about the converse, as the Example \ref{e1}
shows (there, we have a left Loday algebroid and the first condition \eqref{conda} above is trivially satisfied while \eqref{condb} is not).
\end{nota}
We can avoid the ``quasi'' prefix if we add the condition of antisymmetry to $B$, thus entering into the realm of Lie structures.
\begin{teo}\label{t5}
Let $B:\mathcal{F}\times\mathcal{F}\longrightarrow\mathcal{F}$ be an $\mathcal{R}$-bilinear form, and $\rho:\mathcal{F}\longrightarrow Der_{\mathcal{R}}(\mathcal{A})$ a morphism of $\mathcal{R}$-modules. Then, $(\mathcal{F},B,\rho)$ is a Lie algebroid if and only if $\mathcal{F}_B$ is closed with respect to $\{\;,\;\}$, $B$ is antisymmetric and, for all $X,Z\in\mathcal{F}$ and $f\in\mathcal{A}$, the following holds:
\begin{equation*}
B(X,f.Z)=f.B(X,Z)+\rho(X)(f).Z
\end{equation*}
(that is, $ad^L_X\in QDer_{\mathcal{R}}(\mathcal{A})$).
\end{teo}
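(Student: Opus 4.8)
The plan is to prove both implications by reducing everything to the structural results already in place, chiefly Proposition \ref{p1} and Theorem \ref{t4}. The forward implication is essentially a matter of unwinding Definition \ref{d3}. If $(\mathcal{F},B,\rho)$ is a Lie algebroid, then by definition $(\mathcal{F},B)$ is a Lie algebra, so $B$ is antisymmetric, and by the ``moreover'' part of Proposition \ref{p1} this is equivalent to $\mathcal{F}_B$ being closed under $\{\;,\;\}$. The remaining condition $B(X,f.Z)=f.B(X,Z)+\rho(X)(f).Z$ is nothing but the statement that $ad^L_X\in QDer_{\mathcal{R}}(\mathcal{F})$ with $\widehat{ad^L_X}=\rho(X)$, which is built into Definition \ref{d3} through the anchor $q_{\mathcal{F}}(X)=\widehat{ad^L_X}$. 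Thus each of the three right-hand conditions is immediate.

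For the converse, I would first observe that closure of $\mathcal{F}_B$ together with antisymmetry of $B$ gives, again via Proposition \ref{p1}, that $(\mathcal{F},B)$ is a Lie algebra, while the third hypothesis makes each $ad^L_X$ a quasi-derivation with $\widehat{ad^L_X}=\rho(X)$. Hence $(\mathcal{F},B,\rho)$ is at least a Lie quasi-algebroid in the sense of Definition \ref{d3}, and the only thing left to establish is that $\rho$ is tensorial, i.e. that the ``quasi'' prefix can be dropped.

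The key observation that makes this work is that antisymmetry supplies, for free, the second quasi-derivation condition \eqref{condb} of Theorem \ref{t4}. Indeed, writing $B(f.X,Z)=-B(Z,f.X)$ and applying the given left condition with the arguments interchanged, $B(Z,f.X)=f.B(Z,X)+\rho(Z)(f).X$, one gets
\begin{equation*}
B(f.X,Z)=-f.B(Z,X)-\rho(Z)(f).X=f.B(X,Z)-\rho(Z)(f).X,
\end{equation*}
which is precisely \eqref{condb}. Therefore both hypotheses \eqref{conda} and \eqref{condb} of Theorem \ref{t4} hold, and that theorem yields that $(\mathcal{F},B,\rho)$ is a left Loday algebroid, i.e. $\rho$ is tensorial. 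Combining tensoriality with the antisymmetry of $B$ gives, by Definition \ref{d3}, that $(\mathcal{F},B,\rho)$ is a Lie algebroid.

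I expect the tensoriality argument in the converse to be the only real obstacle; everything else is bookkeeping against the definitions. The clean way around it is the reduction just described---deriving the right-hand condition \eqref{condb} from antisymmetry so as to invoke Theorem \ref{t4}---rather than grinding out the four-term identity $B(f.X,g.Z)-g.B(f.X,Z)=f.B(X,g.Z)-fg.B(X,Z)$ directly, although that computational route (mirroring the proof of Theorem \ref{t4}) would serve as a fallback.
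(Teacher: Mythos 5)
Your proposal is correct and follows essentially the same route as the paper's proof: both directions rest on Proposition \ref{p1}, the converse passes through the Lie quasi-algebroid structure (the paper cites Theorem \ref{t3} for this step), and tensoriality of $\rho$ is obtained exactly as in the paper, by deriving condition \eqref{condb} of Theorem \ref{t4} from antisymmetry and then invoking that theorem. No gaps; this matches the paper's argument.
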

\begin{proof}
If $(\mathcal{F},\;B,\;\rho)$ is a Lie algebroid, $(\mathcal{F},\;B)$ is a Lie algebra, that is, 
$(\mathcal{F},\;B)$ is a left (and right) Loday algebra and $B$ is antisymmetric, so Proposition \ref{p1}
tells us that $\mathcal{F}_B$ is closed with respect to $\{\;,\;\}$. Now, let $X,Z\in \mathcal{F},\;f\in \mathcal{A}$;
then we have
 \begin{equation*}
 [ad_X^L,\mu_f](Z)=\mu_{\rho(X)(f)}(Z),
 \end{equation*}
 which is the same as
 \begin{equation*}
B(X,f.Z)=f.B(X,Z)+\rho(X)(f).Z.
 \end{equation*}
 If now is $\mathcal{F}_B$ closed with respect to $\{\;,\;\}$, Proposition \ref{p1} again tells us that
 $(\mathcal{F},\;B)$ is a left Loday algebra, but as $B$ is also antisymmetric, 
 $(\mathcal{F},\;B)$ is a Lie algebra.\\
 On the other hand, the hypothesis of Theorem \ref{t3} are satisfied, so we know that $ad_X$ is 
 a quasi-derivation for all $X\in \mathcal{F}$ and $(\mathcal{F},\;B)$ is a Lie quasi-algebroid
 with anchor map $\rho$.\\
 To finish, let us check (see Theorem \ref{t4}) that for all $X,Y,Z\in \mathcal{F},\;\forall\;f\in \mathcal{A}$ the
 following holds:
 $$B(f.X,Z)=f.B(X,Z)-\rho(Z)(f).X.$$
 But, by the antisymmetry of $B$:
\begin{eqnarray*}
 B(f.X,Z)&=& -B(Z,f.X)\\
 &=&-f.B(Z,X)-\rho(Z)(f).X\\
 &=& f.B(X,Z)-\rho(Z)(f).X.
 \end{eqnarray*}
 So $(\mathcal{F},\;B,\;\rho)$ is a Lie algebroid.
\end{proof}
The preceding results motivate the following definition.
\begin{defi}
Let $\mathcal{A}$ be an associative, commutative algebra with unit element $1_{\mathcal{A}}$ over a commutative ring with 
unit element $1_{\mathcal{R}}$. Let $\mathcal{F}$ be an $\mathcal{A}$-module. We call $(\mathfrak{gl}(\mathcal{F})\times \mathcal{F},\{\;,\;\})$ the left omni-Loday algebroid determined by
$\mathcal{F}$.
\end{defi}
\begin{nota}
Note that if $(\mathcal{F},B,\rho)$ is a left Loday algebroid then, in particular, is a left Loday quasi-algebroid and thus 
$\mathcal{F}_B\subset \mathfrak{gl}(\mathcal{F})\times\mathcal{F}$ is closed with respect to $\{\;,\;\}$, by Theorem \ref{t3}: every left Loday algebroid can be seen as a closed subespace of left omni-Loday algebroid.
\end{nota}
\begin{nota}
In the case of Lie algebroids, we have the same situation as in the preceding remark: given an $\mathcal{R}$-bilinear $\mathcal{F}$-valued form $B:\mathcal{F}\times\mathcal{F}\longrightarrow\mathcal{F}$ such that it is antisymmetric and satisfies $ad^L_X\in QDer_{\mathcal{R}}(\mathcal{F})$, by Theorem \ref{t5}
there is a correspondence between Lie algebroids $(\mathcal{F},B,\rho)$ and closed subspaces $\mathcal{F}_B$, but this time given by an ``if and only if" statement. Thus, we could call $(\mathfrak{gl}(\mathcal{F})\times\mathcal{F},\{\;,\;\})$ an omni-Lie algebroid as well.
\end{nota}
It is worth noting that a different definition for omni-Lie algebroids (based on the notion of Courant structures on the direct sum of
the gauge Lie algebroid and the bundle of jets of a vector bundle $E$ over a manifold $M$), has been presented very recently in \cite{Chen}. It would be interesting to know if this definition is equivalent to ours.

\label{lastpage}
\end{document}